\numberwithin{equation}{section}
\newtheorem{theorem}{Theorem} 
\newtheorem{proposition}[theorem]{Proposition}
\newtheorem{lemma}[theorem]{Lemma}
\theoremstyle{remark}
\def\be{\begin{equation}}
\def\ee{\end{equation}}
\def\ve{\varepsilon}
\def\be{\begin{equation}}
\def\ee{\end{equation}}
\def\ve{\varepsilon}
\def\mod{\text{mod\,}}
\begin{document}
\begin{abstract}
The main purpose of this Note is to provide a non-trivial bound on
certain Kloosterman sums as considered in the recent paper of E.~Fouvry
\cite {F}, leading to a small improvement of his result on the Pell
equation.
\end{abstract}

\title
{A remark on solutions of the Pell equation}
\author
{J.~Bourgain}
\address
{School of Mathematics, Institute for Advanced Study, Princeton, NJ 08540}
\address
{bourgain@math.ias.edu}
\maketitle
\parskip =\medskipamount

\centerline{ 0. INTRODUCTION}

This Note is motivated by the recent work of E.~Fouvry \cite {F}
related to the size of the fundamental solution of the Pell equation and related conjectures due to Hooley \cite{H}.
Let us briefly recall the background.
Let $D$ be a non square positive integer and $\ve_D$ the fundamental solution to the Pell equation
\be\label{0.1}
t^2 -Du^2=1.
\ee 
Following \cite {H} and \cite{F}, introduce for $\alpha>0$ and $x\geq 2$ the set
\be\label{0.2}
S^f(x, \alpha) =|\{(\ve_D, D); 2\leq D\leq x, D \text { non square and $\ve_D\leq D^{\frac 12+\alpha}\}|$}.
\ee
For $0<\ve_0\leq\alpha\leq \frac 12$, Hooley established the asymptotic formula
\be\label{0.3}
S^f(x, \alpha)\sim \frac {4\alpha^2}{\pi^2} x^{\frac 12} (\log x)^2 \text { for } x\to\infty
\ee
(see also Theorem A in \cite {F}).
For $\alpha >\frac 12$, he went on making several further conjectures on the size of this set, in particular the behavior.
\be\label{0.4}
S^f(x, \alpha)\sim B(\alpha) x^{\frac 12} (\log x)^2
\ee
with
\be\label{0.5}
B(\alpha)=
\begin{cases}
&\frac 4{\pi^2} \Big(\alpha -\frac 14\Big) \text { for } \frac 12 <\alpha\leq 1\\
&\frac 4{\pi^2} \Big(\alpha -\frac 14\Big) +\frac 1{18\pi^2}(\alpha - 1)^2 \text { for } 1\leq \alpha \leq \frac 52\\
&\frac 4{\pi^2}\Big(\alpha -\frac 14\Big)+\frac 1{6 \pi^2}\Big(\alpha - \frac 74\Big) \text { for } \alpha >\frac 52.
\end{cases}
\ee
Some of the heuristics in \cite{H} was formalized in \cite{F}, where a lower bound for $\frac 12 \leq \alpha\leq 1$
\be\label{0.6}
S^f(x, \alpha)\geq \frac 1{\pi^2} \big(1+(2\alpha -1) (3-2\alpha)-o(1)\big) x^{\frac 12} (\log x)^2
\ee
is obtained.
In the same paper, Fouvry also shows how to derive that
\be\label{0.7}
S^f(x, \alpha)> \big(B(\alpha)-o(1)\big) x^{\frac 12}(\log x)^2 \ \text { for } \ \frac 12\leq \alpha\leq 1
\ee
assuming a rather modest (but still unproven) bound on certain Kloosterman sums.
The approach in \cite{F} leads indeed to exponential sums of the type
\be\label{0.8}
\sum_{u_1\sim U_1} \beta_{u_1} \sum_{u_2 \sim U_2} e\Big( h\frac {\bar  u_2^2}{u_1^2}\Big)
\ee
where $(u_1, u_2)=1$ and $\bar u_2$ denotes the inverse of $u_2\, \mod\, u_1^2$; $|\beta_{u_1}|\leq 1$.

Fouvry's lower bound \eqref{0.6} relies essentially on various estimates on \eqref{0.8}.
The inner sum is typically an incomplete Kloosterman sum for which presently non-trivial bounds (with power gain) are only available
for $U_2>U_1^{1+\ve}$.
Compared with \eqref{0.7}, the weaker lower bound \eqref{0.6} is due to the exclusion of certain ranges of $U_1, U_2$ for which \eqref{0.8}
cannot be adequately estimated.
In particular, when $\alpha>\frac 12$ is near $\frac 12$, further estimates on \eqref{0.8} for $U_1<U_2<U_1^{1+\ve}$ become quite relevant.

Following an approach initiated by Karacuba and developed further in \cite{B-G1}, \cite{B-G2}, it turns out that one can bound an incomplete
Kloosterman sum
\be\label {0.9}
\sum_{x<N} e\Big(a\frac {\bar x}q\Big)
\ee
non-trivially, for very short intervals $N=q^\rho$ ($\rho>0$ arbitrary, fixed).

It was shown for instance in \cite{B-G2} that
\be\label {0.10}
|\eqref{0.9}|\ll N.(\log q)^{-\frac 12+\ve}.
\ee
This was achieved by decomposing the set $\{1, \ldots, N\}$ in subsets $E$ and $E'$ where $E$ is small and the elements of $E'$ are
`well-factorable' to the extent that one can invoke the theory of multi-linear Kloosterman sums, of the form
\be\label{0.11}
\sum_{x_1\in I_1, \dots, x_r\in I_r} \, e\Big(a\frac {\bar x_1 \bar x_2\cdots \bar x_r}q\Big).
\ee
In \S2 of this paper, we follow a similar path to handle sums
\be\label{0.12}
\sum_{x<N} e\Big(a\frac {\bar x^2}q\Big).
\ee
Given a small parameter $\beta$, we isolate an exceptional subset $E\subset\{1, \ldots, N\}$ of size
\be\label{0.13} 
|E|<\beta\Big(\log \frac 1\beta\Big)^2 N
\ee
(arithmetically defined, independently of the modulus $q$), and such that
\be\nonumber
\sum_{x<N, x\in E} \ e\Big(a\frac {\bar x^2}q\Big)
\ee
can be decomposed in  few multilinear sums
\be\label{0.14}
\sum_{x_1\in I_1, \ldots, x_1\in I_r} \, e\Big(a\frac {\bar x_1^2 \cdots \bar x_r^2}q\Big)
\ee
that may be estimated with a power gain.
It should be noted that in order to achieve this last step, in addition to Karacuba's amplification technique, we also rely 
essentially on \cite {B} \big(which, at least for certain choices of short intervals $I_1, \ldots, I_r$ is not required to handle
\eqref{0.11} \big).

Returning to Fouvry's approach and \eqref{0.8}, we apply the preceding
in the excluded range $U_2<U_1^{1+\ve}$, introducing the exceptional set $E(U_2)$ outside of which the restricted sum
\be\label{0.15}
\sum_{u_2\sim U_2, u_2\not \in E(U_2)} \ e\Big(h\frac {\bar u_2^2}{u_1^2}\Big)
\ee
admits good estimates.
It turns out that the proper choice of $\beta$ is of the form
\be\label{0.16}
\beta= \Big(\alpha -\frac 12\Big)^c
\ee
for some absolute constant $c>0$.
The effect on the main term in Fouvry's approach, by deleting only the subset
\be\label {0.17}
\{(u_1, u_2); u_1\leq x^{\frac 14}, x^{\frac 12} u_1^{-1} \leq u_2\leq \min (x^\alpha u_1^{-1}, x^{\frac 12}u_1), u_2 \in E(U_2)\}
\ee
of the excluded range
\be\label{0.18}
\{(u_1, u_2); u_1\leq x^{\frac 14}, x^{\frac 12} u_1^{-1} \leq u_2\leq \min (x^\alpha u_1^{-1}, x^{\frac 12} u_1)\}
\ee
is to get a lower bound, for $\alpha <\frac 12$ close to $\frac 12$, of the form
\be\label{0.19}
S^f(x, \alpha)> \big(B(\alpha)-\delta(\alpha)\big) x^{\frac 12}(\log x)^2
\ee
where
\be\label{0.20}
\delta (\alpha) =0\Big(\Big(\alpha-\frac 12\Big)^{2+c}\Big)
\ee
rather than
\be\nonumber
\delta(\alpha)= 0\Big(\Big(\alpha -\frac 12\Big)^2\Big)
\ee
resulting from \eqref{0.6}.

\section
{Some Preliminary Lemmas}

\begin{lemma}\label{Lemma1}
Fix some $r\in\mathbb Z_+$ and $\beta>0$ sufficiently small. 
For $n\leq N$, let $n=p_1p_2 \cdots, p_1\geq p_2\geq\cdots$ be its prime
factorization.
Then there is a subset $E\subset \{1, \ldots, N\}$,
\be\label{1.1}
|E|<\beta \Big(\log \frac 1\beta\Big)^r
\ee
such that for $n<N, n\not\in E$, $n$ has at least $r$ prime factors and
\be\label{1.2}
p_r>N^\beta
\ee
\end{lemma}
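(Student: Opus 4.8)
The plan is to recast the statement in terms of the number of large prime factors of $n$, reduce it to a density estimate for a fixed arithmetic set, and then prove that estimate by induction on $r$; the only non-elementary ingredient will be the classical bound on smooth numbers.

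For $n\in\mathbb Z_+$ and $y\ge 1$ let $\Omega_{>y}(n)$ denote the number of prime factors of $n$, counted with multiplicity, that exceed $y$. Since $n=p_1p_2\cdots$ with $p_1\ge p_2\ge\cdots$, the requirement ``$n$ has at least $r$ prime factors and $p_r>N^\beta$'' is precisely ``$\Omega_{>N^\beta}(n)\ge r$''. Thus the Lemma amounts to taking
$$E:=\bigl\{\,n\le N:\ \Omega_{>N^\beta}(n)\le r-1\,\bigr\},$$
which is defined purely arithmetically (only $N,\beta,r$ enter), and proving $|E|<\beta(\log\tfrac1\beta)^{r}N$. (Throughout, $N$ is taken large in terms of $r$ and $\beta$; this is the regime of interest.) Writing $n=dm$ with $d$ the part of $n$ supported on primes $>N^\beta$ and $m$ the $N^\beta$-smooth part, one has $\Omega(d)=\Omega_{>N^\beta}(n)$ and $(d,m)=1$; peeling off the largest prime factor of $n$ then yields, with $G_r(M):=\#\{n\le M:\Omega_{>N^\beta}(n)\le r-1\}$ ($N^\beta$ fixed) and $\Psi(x,y):=\#\{m\le x:\ p\mid m\Rightarrow p\le y\}$,
$$G_r(M)\ \le\ \Psi(M,N^\beta)\ +\sum_{N^\beta<p\le M}G_{r-1}(M/p),\qquad G_0\equiv 0.$$

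Next I would prove, by induction on $r$, that $G_r(M)\ll_r M\,\phi_r\bigl(\tfrac{\log M}{\beta\log N}\bigr)$ for all $M$, where $\phi_0\equiv0$, $\phi_r(u)=1$ for $u\le r$, and $\phi_r(u)=\rho(u)+\int_1^u\phi_{r-1}(u-t)\,\tfrac{dt}{t}$ for $u>r$, with $\rho$ the Dickman function. In the inductive step the term $\Psi(M,N^\beta)$ is controlled by the de Bruijn estimate $\Psi(x,x^{1/u})=\rho(u)x(1+o(1))$ (uniform for bounded $u$, $x\to\infty$), which produces the summand $\rho(u)$; the sum over $p$ is converted into the integral by Mertens' theorem in the form $\sum_{N^\beta<p\le M}\tfrac1p\,g\bigl(\tfrac{\log p}{\beta\log N}\bigr)=\int_1^{\log M/(\beta\log N)}g(t)\,\tfrac{dt}{t}+O\bigl(\tfrac1{\beta\log N}\bigr)$ applied to $g=\phi_{r-1}$, the error being harmless for $N$ large. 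A routine estimation of this integral recursion gives $\phi_r(u)\ll_r(\log(u+2))^{r-1}/u$; hence, taking $M=N$ (so $u=1/\beta$),
$$|E|=G_r(N)\ \ll_r\ \beta\bigl(\log\tfrac1\beta\bigr)^{r-1}N,$$
which is $<\beta(\log\tfrac1\beta)^{r}N$ once $\beta$ is sufficiently small — indeed with room to spare, the true order being even smaller.

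The step I expect to be the genuine obstacle is the smooth-number input: one really does need the theory of smooth numbers (not a bare Rankin bound, which is insufficiently uniform as $x\to\infty$) to guarantee $\Psi(x,N^\beta)=o_\beta(x)$; everything else is bookkeeping with Mertens and the elementary recursion above. As an alternative to the induction one may use the single inequality $|E|\le z^{-(r-1)}\sum_{n\le N}z^{\Omega_{>N^\beta}(n)}$ for $0<z<1$, optimized near $z\asymp r/\log\tfrac1\beta$; but that forces one to handle the sign-alternating Möbius-type expansion of $z^{\Omega_{>y}}$, so the recursive route looks cleaner.
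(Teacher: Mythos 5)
Your proposal is correct, but it takes a genuinely different route from the paper. You recast the complement condition as $\Omega_{>N^\beta}(n)\ge r$ and peel off the largest prime factor one at a time, arriving at the Buchstab-type recursion $G_r(M)\le \Psi(M,N^\beta)+\sum_{N^\beta<p\le M}G_{r-1}(M/p)$, which you then solve by induction against a Dickman-type integral recursion. The paper instead peels off all the large primes at once, writing $n=p_1\cdots p_{r_1}n'$ with $n'$ the $N^\beta$-smooth part, and splits according to whether $n'$ is below or above an auxiliary threshold $N^\gamma$, choosing $\alpha=(\log\frac1\beta)^{-1}$ (a lower threshold for $p_1$) and $\gamma=\beta\log\frac1\beta$ at the end; no functional equation is needed. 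Both arguments rest on the same two inputs (smooth-number counts and Mertens sums) and both require $N$ large in terms of $\beta$. Your route buys a sharper exponent, $\ll_r\beta(\log\frac1\beta)^{r-1}N$ versus the paper's $\beta(\log\frac1\beta)^{r}N$, at the cost of a slightly heavier analytic apparatus; the paper's one-shot decomposition is cruder but shorter and makes the uniformity in $\beta$ (needed down to $\beta>1/\log N$ in the Proposition of \S2) completely transparent. One of your side remarks is backwards: a Rankin-type upper bound $\Psi(x,x^{1/u})\ll u^{-u}x$ is all that either argument needs --- your recursion only ever uses $\Psi$ as an upper bound, and the paper invokes exactly $\psi(N,N^{\alpha})\lesssim\alpha^{1/\alpha}N$ --- so the full de Bruijn asymptotic is dispensable. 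Two minor points to tidy up in a write-up: the $O(1/(\beta\log N))$ partial-summation errors occur at every level of the induction and at every scale $M$, so the inductive hypothesis should carry an explicit constant $C_{r-1}$ valid for all $M\le N$; and the bound \eqref{1.1} as printed is missing the factor $N$, which you correctly restore.
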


\begin{proof}
First, the set of integers $n\leq N$ with fewer than $r$ prime factors is bounded by
\be\label{1.3}
C_r\frac { N(\log\log N)^{r-1}}{\log N}.
\ee
We may also assume $p_1\geq N^\alpha$, excluding a set of size 
\be\label{1.4}
\psi(N, N^\alpha)\lesssim \alpha^{\frac 1\alpha} N
\ee
where $\alpha>\beta$ is a parameter to be determined.

Next, assume $n$ of the form
\be\label{1.5}
n=p_1\cdots p_{r_1} n', r_1<r
\ee
$p_1>N^\alpha, p_2, \ldots, p_{r_1}\geq N^\beta$ and $n'$ with no factors larger than $N^\beta$.

Distinguishing the case $n'<y$ and $n'\geq y $ ($y=N^\gamma$ a parameter), the number of those integers may be bounded by
\begin{align}\label{1.6}
&\sum_{n'< y} \ \sum_{p_2, \ldots, p_{r_1} \geq N^\beta} \ \frac N{n' p_2\cdots p_{r_1}} \ \frac 1{\alpha\log N} +
\sum_{\substack {p_1 \cdots p_{r_1}<\frac N y\\ p_1, \ldots, p_{r_1}> N^\beta}} \ \psi \Big(\frac N{p_1\cdots p_{r_1}}, N^\beta\Big)\nonumber\\
&\lesssim \frac N{\alpha\log N}\Big(\log \frac 1\beta\Big)^{r_1-1} \log y +\Big(\frac \beta\gamma\Big)^{\frac \gamma \beta} 
\sum_{\substack {p_1\cdots p_{r_1} <N\\ p_1, \ldots, p_{r_1}>N^\beta}} \ \frac N{p_1\cdots p_{r_1}}\nonumber\\
& \leq \frac\gamma\alpha \Big(\log \frac 1\beta\Big)^{r_1-1} N+\Big(\frac\beta\gamma\Big)^{\frac \gamma\beta} \Big(\log\frac 1\beta\Big)^{r_1} N
\end{align}

Take $\alpha =\Big(\log \frac 1\beta\Big)^{-1}, \gamma =\beta\log \frac 1\beta$.
This gives
\be\label{1.7}
\frac {|E|}N\leq \alpha^{\frac 1\alpha}+ \frac \gamma\alpha \Big(\log \frac 1\beta\Big)^{r-2} +\Big(\frac \beta\gamma\Big)^{\frac \gamma\beta}
\Big(\log \frac 1\beta\Big)^{r-1} \leq \beta\Big(\log \frac 1\beta\Big)^r
\ee
proving Lemma \ref{Lemma1}.
\end{proof}

Next statement is a variant of a Lemma due to Karacuba.

\begin{lemma}\label{Lemma2}
\be\label{1.8}
\Big|\Big\{ (x_1, \ldots, x_{2\ell})\in \mathcal P^{2\ell}; x_i, x_{i+\ell}\sim M_i \text { and }
\frac 1{x_1^2}+\cdots+\frac 1{x_\ell^2} =\frac 1{x^2_{\ell+1}} +\cdots+\frac 1{x^2_{2\ell}}\Big\}\Big|
< (2\ell)^\ell \prod^\ell_{i=1}
\frac {M_i}{\log M_i}
\ee
\end{lemma}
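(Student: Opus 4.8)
\emph{Sketch of proof.} The plan is to exploit primality to show that the Diophantine equation $x_1^{-2}+\cdots+x_\ell^{-2}=x_{\ell+1}^{-2}+\cdots+x_{2\ell}^{-2}$ has only \emph{diagonal} solutions, namely those for which the multiset $\{x_1,\dots,x_\ell\}$ coincides with the multiset $\{x_{\ell+1},\dots,x_{2\ell}\}$; granting this, the asserted bound reduces to counting diagonal tuples.

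First I would establish the arithmetic input: if $p_1,\dots,p_\ell$ and $q_1,\dots,q_\ell$ are primes with $\sum_i p_i^{-2}=\sum_j q_j^{-2}$, then $\{p_i\}=\{q_j\}$ as multisets, by induction on $\ell$. Let $P$ be the largest prime occurring among the $p_i$ and $q_j$, with multiplicity $a$ on the left and $b$ on the right. Isolating those terms gives
\[
\frac{a-b}{P^{2}}=\sum_{q_j<P}\frac{1}{q_j^{2}}-\sum_{p_i<P}\frac{1}{p_i^{2}},
\]
and the right-hand side, written in lowest terms, has denominator built only from primes $<P$, hence coprime to $P$. Therefore $P^{2}\mid a-b$; since $|a-b|\le\ell<P^{2}$ in the relevant range ($M_i$ large, so that every prime $\sim M_i$ exceeds $\sqrt{\ell}$; the complementary case is disposed of by the trivial count of all tuples), we get $a=b$. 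Deleting the $a=b$ copies of $P$ leaves an equation in $\ell-a$ primes per side, all $<P$, and the induction hypothesis finishes the claim. Primality is indispensable here: for a prime $x$ the rational $x^{-2}$ has nontrivial denominator at the single place $x$, which is exactly what powers the valuation argument at the largest prime; the analogous statement is false for general integers in the windows $\sim M_i$, so the combinatorics cannot be decoupled from the arithmetic.

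It remains to count the tuples $(x_1,\dots,x_{2\ell})$ with $x_i,x_{i+\ell}\sim M_i$ and $\{x_1,\dots,x_\ell\}=\{x_{\ell+1},\dots,x_{2\ell}\}$. There are $\prod_{i=1}^{\ell}\bigl(\pi(2M_i)-\pi(M_i)\bigr)$ choices for $(x_1,\dots,x_\ell)$, and then $(x_{\ell+1},\dots,x_{2\ell})$ must be one of at most $\ell!$ rearrangements of it (fewer, once the constraints $x_{i+\ell}\sim M_i$ are imposed). By the Brun--Titchmarsh inequality $\pi(2M_i)-\pi(M_i)\le 2M_i/\log M_i$, so the number of solutions is at most $\ell!\,2^{\ell}\prod_i M_i/\log M_i\le(2\ell)^{\ell}\prod_i M_i/\log M_i$, using $\ell!\le\ell^{\ell}$; the strict form of Brun--Titchmarsh (or the prime number theorem for large $M_i$ together with a direct check for small ones) upgrades $\le$ to the asserted $<$. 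The exponent $(2\ell)^{\ell}$ thus leaves ample room for the $\ell!$ from the permutations and the factor $2$ from Brun--Titchmarsh, and the only step carrying real content is the reduction to diagonal solutions above.
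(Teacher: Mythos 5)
Your argument is correct and gives the stated bound, but it is not the route the paper takes, and it proves more than is needed. The paper's proof is a one-line divisibility observation: clearing denominators in $\sum_{i\le\ell}x_i^{-2}=\sum_{i>\ell}x_i^{-2}$ yields an integer identity in which every term except the $i$-th is divisible by $x_i^2$, whence $x_i\mid\prod_{j\ne i}x_j$ and, by primality, $x_i\in\{x_j:j\ne i\}$. That is, each value occurs at least twice somewhere in the full $2\ell$-tuple (not necessarily once on each side), so the tuple carries at most $\ell$ distinct values, and the count then goes through exactly as in your final paragraph, with the $\ell!$ permutations replaced by the number of partitions of $\{1,\dots,2\ell\}$ into blocks of size at least two --- either quantity is comfortably absorbed by $(2\ell)^\ell$. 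Your valuation-plus-induction argument at the largest prime $P$ establishes the strictly stronger rigidity statement that the two sides agree as multisets; this is a legitimate and arguably more illuminating route, but it costs you the hypothesis $P^2>\ell$, which the lemma does not assume and which you must patch by a separate trivial count in the degenerate range where all $M_i\lesssim\sqrt{\ell}$ --- a patch that itself needs a line of verification, since $\prod_i(\pi(2M_i)-\pi(M_i))^2$ beats $(2\ell)^\ell\prod_i M_i/\log M_i$ only because of that size restriction. In the application the $M_i$ exceed $N^\beta$ with $\ell$ bounded, so the case distinction is immaterial, but for the lemma as stated the paper's simultaneous divisibility argument is cleaner: it needs no size assumption and no induction, and the extra structural information your proof extracts buys nothing further for the counting.
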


\begin{proof}

If $\frac 1{x_1^2} +\cdots+\frac 1{x_\ell^2} =\frac 1{x^2_{\ell+1}} +\cdots+ \frac 1{x^2_{2\ell}}$, then $x_i|\prod\limits_
{\substack{1\leq j\leq 2\ell\\ j\not= i}} x_j$ and hence \hfill\break $x_i \in \{x_j; j\not= i\}$.
\end{proof}

The statement follows

\section
{A Bound on an Incomplete Kloosterman Sum}

Our aim is to bound the incomplete Kloosterman sum
\be\label{2.1}
\sum_{1\leq x\leq N, (a, q)=1} e_q (a\bar x^2)
\ee
where $q\in \mathbb Z_+, (a, q)=1$ and $\bar x$ is the inverse of $x (\text{mod\,} q)$.

The range $N=q^\rho$ for some fixed $0<\rho<1$.

Let $0<\beta<1$ be a parameter and apply Lemma 1 with $r$ some fixed integer, to be specified later.
The integers $x\in \{1, \ldots, N\}\backslash E$ admit a factorization
\be\label{2.2}
x=p_1\cdots p_r x' \text { with } \ p_1\geq\cdots\geq p_r>p^\beta, \text { prime factors of $x'$ are  $\leq p_r$}.
\ee
We require moreover that
\be\label{2.3}
p_i>\Big(1+\frac {10}{\log N}\Big) p_{i+1} \text { for } 1\leq i\leq r.
\ee
The size of the complementary set is indeed at most
\be\label{2.4}
\sum_{\substack{pp'<N\\ p<p'<p+\frac p{\log N}}} \frac N{pp'} < \sum_{p<N} \frac Np \frac 1{\log N}< \frac {N\log \log N}{\log N}
\ee
Thus the exceptional set $E\subset\{ 1, \ldots, N\}$ satisfies
\be\label{2.5}
|E|< C_r\Big[\beta \Big(\log \frac 1\beta\Big)^r +\frac {\log\log N}{\log N}\Big] N
\ee 
by \eqref{1.1}, \eqref{2.4}.

Partition the remaining integers $n\in \{1, \ldots, N\}\backslash E$ in ranges
$$
\begin{aligned}
\Big(1-\frac 1{\log N}\Big) M_i <p_i & <M_i\qquad (1\leq i\leq r)\\
x'&< \frac {2N}{M_1\cdots M_r}
\end{aligned}
$$
where
\be\label{2.6}
M_i> \Big(1+\frac 2{\log N}\Big) M_{i+1} \, (1\leq i< r)\text { and } \,  M_i> N^\beta\quad (1\leq i\leq r)
\ee
and all prime factor of $x'$ are $<M_r$.
We use here property \eqref{2.3}.

This leads to a bound of the form
\be\label{2.7}
\Big|\sum_{x\leq N, x\not\in E, (x, q) =1} e_q(a\bar x^2)\Big| \lesssim (\log N)^r.\eqref{2.6}
\ee
where \eqref{2.6} denotes an upperbound on a multilinear sum of the type
\be\label{2.8}
\sum_{\substack{p_1\in I_1, \ldots, p_r \in I_r\\ x'\in I_{r+1}}} e_q \big(a\bar p_1^2\cdots \bar p^2_r \bar {x'}^2\big)
\ee
with $I_i$ of the form $I_i =\big[ M_i-\frac {M_i}{\log N}, M_i\big]$, where $M_1, \ldots, M_r>N^\beta$,\hfill\break
 $M_1\ldots M_r|I_{r+1}|<N$.

Fixing $x'\in I_{r+1}$, we bound the sum
\be\label{2.9}
S=\sum_{p_1 \in I_1, \ldots, p_1\in I_r} e_q (a\bar p^2_1\cdots \bar p_r^2)
\ee
using a similar argument as in \cite{B-G2}.

Let $M_i=\frac 12 q^{\beta_i}$.
Hence $\beta_i\geq \rho\beta$.

Let $\ell_1, \ldots, \ell_r \in \mathbb Z_+$ be defined by the condition
\be\label{2.10}
\beta_i(8\ell_i-2)< 1\leq \beta_i(8\ell_i+6).
\ee
It follows that if $x_1, y_1, \ldots x_{2\ell}, y_{2\ell} \in I_i\cap 
\mathcal P$ (coprime with $q$) satisfy
\be\nonumber
\bar x_1^2 +\cdots+\bar x^2_{2\ell}\equiv \bar y_1^2 +\cdots+ \bar y^2_{2\ell} \ (\text {mod\,} q)
\ee
then
\be\nonumber
\Big(\sum^{2\ell}_{i=1} \prod_{j\not= i} x_j^2\Big) \prod_{j=1}^{2\ell} y_j^2 -\Big(\sum^{2\ell}_{i=1} \prod_{j\not= i} y^2_j\Big)
\prod^{2\ell}_{j=1} x_j^2 \equiv 0 \ (\mod q)
\ee
and since this integer is bounded by $2\ell M_i^{8\ell-2}< q$,
\be\nonumber
\frac 1{x_1^2} +\cdots+ \frac 1{x^2_{2\ell}}= \frac 1{y_1^2} +\cdots + \frac 1{y^2_{2\ell}}.
\ee
Estimate using H\"older's inequality and setting $\tilde M_i=\frac {M_i}{\log M_i}$
\be\label{2.11}
|S|^{2^r\ell_1\ldots \ell_r}\leq \frac {(\tilde M_1\cdots \tilde M_r)^{2^r\ell_1\ldots\ell_r}} {\tilde M_1^{2\ell_1}\ldots \tilde M_r^{2\ell_r}}
\quad (2.12)
\ee
with
\be\nonumber
(2.12) =\sum_{z_1, \ldots, z_r} \mu_1 (z_1)\cdots \mu_r (z_r) e_q (a z_1\ldots z_r)
\ee
and
\be\nonumber
\mu_i(z) =|\{ (x_1, \ldots, x_{2\ell_i}) \in (I_i\cap P)^{2\ell_i}; \bar x_1^2 +\cdots +\bar x^2_{\ell_i}
-\cdots - \bar x^2_{2\ell_i}\equiv z(\mod q)\}|,
\ee
From \eqref{2.11} and the preceding, Lemma \ref{Lemma2} implies that
\stepcounter{equation}
\be\label{2.13}
\Vert\mu_i\Vert_2^2 =\sum_z \mu_i (z)^2 < (4\ell)^{2\ell} (\tilde M_i)^{2\ell_i}
\ee
while obviously
\be\label{2.14}
\Vert\mu_i\Vert_1 =(\tilde M_i)^{2\ell_i}.
\ee
Note also that by \eqref{2.10}, certainly $\ell_i\geq\frac 1{14\beta_i}$, hence
\be\label{2.15}
(\tilde M_i)^{2\ell_i} > q^{\frac 18}.
\ee
From \eqref {2.13}, \eqref{2.14}
\be\nonumber
\max \mu(z) =\Vert\mu\Vert_\infty <(4\ell)^\ell \Big(\frac {M_i}{\log M_i}\Big)^\ell < q^{-\frac 18} \Vert\mu\Vert_1.
\ee
The previous argument shows more generally that if $q_1\simeq q^{\ve_1}$ and
\be\label{2.16}
1\geq \ve_1> 6\beta_i
\ee
then
\be\label{2.17}
\max_{\xi \in\mathbb Z/q_1\mathbb Z} \sum_{z\equiv \xi(\text {mod\,} q_1)} \mu(z) < q_1^{-\frac 18} \Vert\mu\Vert_1.
\ee
In order to derive \eqref{2.17}, take $1\leq\ell\leq \ell_i$ such that
\be\nonumber
\beta_i(8\ell-2)< \ve_1 \leq \beta_i(8\ell+6)
\ee
and proceed as above with $x_{\ell+1}, \ldots, x_{\ell_i}, x_{\ell_i+\ell+1},\cdots, x_{2\ell_i}$ frozen.

Thus we define for $z\in \mathbb Z/ q_1\mathbb Z$ the density
\be\nonumber
\mu' (z)=|\{(x_1, \ldots, x_{2\ell})\in (I_i\cap \mathcal P)^{2\ell}; \bar x_1^2 +\cdots +\bar x_\ell^2 -\bar x^2_{\ell+1}
-\cdots- \bar x^2_{2\ell}\equiv z(\text {mod\,} q_1)\}|
\ee
for which again
\be\nonumber
\max_{z\in\mathbb Z/q_1\mathbb Z} \mu'(z) < q_1^{-\frac 18} \Vert\mu'\Vert_1
\ee holds.
Since $\mu$ was disintegrated in such measures $\mu'$, \eqref{2.17} follows.

Our aim is to apply Theorem (**) from \cite{B}.
This result may be reformulated as follows.

\begin{lemma}\label{Lemma3}
Given $\gamma>0$, there is $\ve=\ve(\gamma)>0$, $\tau=\tau(\gamma)>0$ and $k=k(\gamma)\in\mathbb Z_+$ such that the
following holds.

Let $\mu_1, \ldots, \mu_k$ be probability densities on $\mathbb Z/q\mathbb Z$ satisfying
\be\label{2.18}
\max_{\xi\in \mathbb Z/q_1\mathbb Z} \Big[\sum_{Z\equiv \xi(\text{mod\,} q_1)}
\mu_i (z)\Big] < q_1^{-\gamma} \ \text { if } \ q_1|q, q_1> q^\ve.
\ee
Then
\be\label{2.19}
\max_{a\in (\mathbb Z/ q\mathbb Z)^*} \Big|\sum e_q(a x_1\ldots x_k)\mu_1(x_1)\cdots \mu_k(x_k)\Big|< C q^{-\tau}.
\ee
\end{lemma}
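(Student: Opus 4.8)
The plan is to obtain the lemma as a direct reformulation of Theorem (**) of \cite{B}, so the only real task is to check that \eqref{2.18} is the hypothesis required there, after which \eqref{2.19} is literally its conclusion. I would first recall the precise statement of Theorem (**): for every $\gamma>0$ it produces $\ve=\ve(\gamma)>0$, $\tau=\tau(\gamma)>0$ and $k=k(\gamma)\in\mathbb Z_+$ such that any probability densities $\mu_1,\dots,\mu_k$ on $\mathbb Z/q\mathbb Z$ which are simultaneously non-concentrated at every divisor scale above $q^\ve$ --- that is, $\max_{\xi}\sum_{z\equiv\xi\,(q_1)}\mu_i(z)<q_1^{-\gamma}$ whenever $q_1\mid q$ and $q_1>q^\ve$ --- satisfy $\max_{a\in(\mathbb Z/q\mathbb Z)^*}\big|\sum e_q(ax_1\cdots x_k)\mu_1(x_1)\cdots\mu_k(x_k)\big|<Cq^{-\tau}$. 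Should the non-degeneracy condition in \cite{B} be phrased through Bohr sets or proper subgroups rather than cosets of divisors of $q$, one notes that for the monomial $x_1\cdots x_k$ the only structured obstruction to cancellation is some $x_i$ concentrating on a coset of $q_1\mathbb Z/q\mathbb Z$ with $q_1\mid q$, which is exactly what \eqref{2.18} forbids, and one translates between the formulations on that basis.

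I would then recall --- only in outline, since this is the content of \cite{B} and not of the present Note --- the mechanism behind the bound: by the Chinese Remainder Theorem the sum splits into a product over the prime-power components of $q$; on a component $p^{e}$ the relevant subobjects are the $p^{j}\mathbb Z/p^{e}\mathbb Z$, and \eqref{2.18} applied at the divisors $p^{j}>q^\ve$ lets one self-convolve the $\mu_i$ to spread them, invoke the prime-field sum--product estimate on the top quotient $p^{j}/p^{j-1}$, and descend through the lower layers by a Hensel-type lifting --- non-concentration being precisely what keeps the iteration alive down to the scale $q^\ve$. Uniformity over $a\in(\mathbb Z/q\mathbb Z)^*$ is automatic, since only the invertibility of $a$ is ever used.

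The one obstacle remaining within this Note is the bookkeeping of constants. Theorem (**) outputs a threshold $\ve=\ve(\gamma)$ below which divisors play no role and a number of factors $k=k(\gamma)$; I would take exactly these, together with $\tau=\tau(\gamma)$, and check that the decay rate $q_1^{-\gamma}$ demanded in \eqref{2.18} is at least the one the theorem needs --- which it is, by construction, since both are governed by the same $\gamma$. With these identifications, \eqref{2.19} is Theorem (**) applied verbatim.
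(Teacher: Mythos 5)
Your proposal matches the paper exactly: the Note gives no proof of Lemma~\ref{Lemma3} at all, simply asserting that it is a reformulation of Theorem (**) from \cite{B}, which is precisely your citation-and-hypothesis-matching argument (your sketch of the internal mechanism of \cite{B} is extra but harmless). No gap.
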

\smallskip

In view of \eqref{2.17} we may take $\gamma=\frac 18$.
Note that since $\sum^r_{i=1}\beta_i \sim \rho\leq 1$, we may assume, up to reordering, that $\beta_1, \ldots,
\beta_{[\frac r2]}\leq \frac 2r<\frac 16 \ve\big(\frac 18\big)$ by taking $r$ large enough.
Assume also $r>2k \big(\frac 18\big) $ for Lemma \ref{Lemma3} to apply.

Exploiting $1\leq i\leq [\frac r2]$, we can then satisfy \eqref{2.16} for $\ve_1\geq \ve(\frac 18)$.
As a consequence of \eqref{2.19} one deduces that
\be\label{2.20}
|(2.12)|< Cq^{-\tau(\frac 18)} \ \Vert\mu_1\Vert_1\ldots \Vert\mu_r\Vert_1.
\ee
Recalling \eqref{2.11} and \eqref{2.14}, we proved
that the sum $S$ introduced in \eqref{2.8} satisfies
\be\label{2.21}
|S|< \tilde M_1\cdots \tilde M_r \, q^{-\tau(\frac 18) (2^r\ell_1\ldots\ell_r)^{-1}}.
\ee
By \eqref{2.10}, $\ell_i\sim \frac 1{\beta_i}< \frac 1{\rho\beta} $ for $i=1, \ldots, r$, where $r$ is some constant.
Therefore
\be\label{2.22}
|S|< M_1\cdots M_r \, q^{-c(\rho\beta)^C}
\ee
\be\label{2.23}
|\eqref{2.6}| < N q^{-c(\rho\beta)^C}.
\ee
Finally, substitution in \eqref{2.7} gives
\be\label{2.24}
\Big|\sum_{x\leq N, x\not\in E, (x, q)=1} \ e_q(a\bar x^2)\Big|\lesssim (\log N)^C N^{1-c(\rho\beta)^C}
\ee
where $N=q^\rho$.
In summary, we proved the following
\begin{proposition}

Let $N=q^\rho$.
Given $\frac 1{\log N}< \beta<\frac 1{10}$, there is a subset $E\subset\{1, \ldots, N\}$ (independent of $q$) satisfying
\be\label{2.25}
|E|\lesssim \beta\Big(\log\frac 1\beta\Big)^CN
\ee
and such that for $(a, q)=1$
\be\label{2.26}
\Big|\sum_{x\leq N, x\not\in E, (x, q)=1} \ e_q (a\bar x^2)\Big|\lesssim (\log N)^C N^{1-c(\rho\beta)^C}
\ee
with $c, C>0$ absolute constants.
\end{proposition}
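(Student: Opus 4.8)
The assertion is the synthesis of the estimates built up in this section, so the plan is to record them in the right order. First I would fix a large absolute integer $r$ (its size to be dictated only at the end, in terms of the constants $\varepsilon(\tfrac18)$ and $k(\tfrac18)$ of Lemma~\ref{Lemma3}) and apply Lemma~\ref{Lemma1} with the given parameter $\beta$; this discards a set of size $\lesssim\beta(\log\tfrac1\beta)^{r}N$, after which every surviving $x\le N$ has the factorization $x=p_1\cdots p_r x'$ with $p_1\ge\cdots\ge p_r>N^{\beta}$ and all prime factors of $x'$ at most $p_r$, as in \eqref{2.2}. To the exceptional set I would also adjoin the integers violating the separation condition \eqref{2.3}; by the crude count \eqref{2.4} this costs only an extra $\tfrac{N\log\log N}{\log N}$, which the hypothesis $\beta>\tfrac1{\log N}$ lets one absorb, giving the bound \eqref{2.25} on $|E|$ with an absolute constant.

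Next I would localize each surviving $x$: put each $p_i$ in a short interval $I_i=\bigl[M_i(1-\tfrac1{\log N}),M_i\bigr]$ and $x'$ in $\bigl[1,\tfrac{2N}{M_1\cdots M_r}\bigr)$, where the $M_i$ exceed $N^{\beta}$, are pairwise separated by factors $1+\tfrac2{\log N}$ (this is exactly where \eqref{2.3} is used), and $x'$ has all prime factors below $M_r$. Since there are only $O\bigl((\log N)^{C}\bigr)$ admissible tuples $(M_1,\dots,M_r)$, the triangle inequality together with a trivial summation over $x'$ (which contributes a factor $N$) reduces the left side of \eqref{2.26} to $\lesssim(\log N)^{C}N\,q^{-c(\rho\beta)^{C}}$, provided the power-saving bound \eqref{2.22} for the multilinear sum $S$ of \eqref{2.9} is available.

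The heart of the matter is therefore \eqref{2.22}, which I would obtain by Karacuba's amplification performed index by index. Writing $M_i=\tfrac12 q^{\beta_i}$ and choosing $\ell_i$ by \eqref{2.10}, any congruence $\bar x_1^2+\cdots+\bar x_{2\ell_i}^2\equiv\bar y_1^2+\cdots+\bar y_{2\ell_i}^2\pmod q$ among primes of $I_i$ is forced to be the genuine rational identity $\sum_j x_j^{-2}=\sum_j y_j^{-2}$, because the integer witnessing the congruence has size $\le 2\ell_i M_i^{8\ell_i-2}<q$ by the choice \eqref{2.10} (the factor $\tfrac12$ in $M_i$ absorbing the polynomial dependence on $\ell_i$); Lemma~\ref{Lemma2} then counts the solutions, yielding $\|\mu_i\|_2^2<(4\ell_i)^{2\ell_i}\tilde M_i^{2\ell_i}$ against $\|\mu_i\|_1=\tilde M_i^{2\ell_i}$, hence $\|\mu_i\|_\infty<q^{-1/8}\|\mu_i\|_1$ and, after disintegrating $\mu_i$ over sub-progressions, the fibre estimate \eqref{2.17} with $\gamma=\tfrac18$ for every divisor $q_1\mid q$ with $q_1>q^{6\beta_i}$. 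Since $\sum_i\beta_i\sim\rho\le 1$, at least $[r/2]$ of the exponents satisfy $\beta_i\le\tfrac2r$, so choosing $r$ large enough makes \eqref{2.16} solvable with $\varepsilon_1\ge\varepsilon(\tfrac18)$ for those indices and leaves at least $k(\tfrac18)$ of them; Lemma~\ref{Lemma3} then applies to that block of factors, the remaining summations being estimated trivially by the $\ell^1$-norms, which gives \eqref{2.20}. Unwinding the H\"older inequality \eqref{2.11} produces \eqref{2.21}; since $r$ is absolute and each $\ell_i\sim\beta_i^{-1}\lesssim(\rho\beta)^{-1}$ by \eqref{2.10}, the resulting exponent obeys $2^{r}\ell_1\cdots\ell_r\lesssim(\rho\beta)^{-C}$, whence \eqref{2.22} and then \eqref{2.24}, which is exactly \eqref{2.26}.

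The main obstacle is the uniform-in-$q_1$ fibre bound \eqref{2.18}: Theorem (**) of \cite{B}, repackaged as Lemma~\ref{Lemma3}, requires each density $\mu_i$ to be quantitatively spread out not merely on $\mathbb Z/q\mathbb Z$ but on every large sub-progression modulo every divisor of $q$, and it is a genuine point that Karacuba's passage from a congruence modulo $q$ to a rational identity continues to work after some of the $2\ell_i$ variables have been frozen (so that one is really reducing modulo a divisor $q_1>q^{6\beta_i}$). This is precisely what dictates the constraints $M_i>N^{\beta}$ and $\varepsilon_1>6\beta_i$, and hence the polynomial loss $(\rho\beta)^{C}$ in the final exponent; once these density estimates are secured, everything else is bookkeeping of logarithmic factors and of the size of the exceptional set.
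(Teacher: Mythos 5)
Your proposal matches the paper's own proof step for step: the same combination of Lemma~\ref{Lemma1} plus the separation condition \eqref{2.3} to build $E$, the same localization into $O((\log N)^C)$ multilinear sums, Karacuba amplification with Lemma~\ref{Lemma2} to obtain the fibre bounds \eqref{2.17}, Lemma~\ref{Lemma3} applied to the block of indices with $\beta_i\le \tfrac2r$, and the unwinding of H\"older to reach \eqref{2.24}. Nothing is missing and nothing differs in substance from the argument in \S2.
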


\section
{Fouvry's Approach}

Following \cite{F}, we introduce the sets (for $\alpha>0)$
\be\label{3.1}
S(x, \alpha)=\{(\eta_D, D); 2\leq D\leq x, \text { $D$ nonsquare, $\ve_D\leq \eta_D\leq D^{\frac 12+\alpha}$}\}
\ee
and
\be\label{3.2}
S^f (x, \alpha)=\{(\ve_D, D); 2\leq D\leq x, D \text { nonsquare, $\ve_D\leq D^{\frac 12+\alpha}$}\}
\ee
with $\ve_D$ the fundamental solution of the Pell equation
\be\label{3.3}
t^2 -Du^2=1
\ee
and $\eta_D$ belonging to the set of solutions
\be\label{3.4}
\{\pm \ve_D^n; n\in \mathbb Z\}.
\ee
One has then (of \cite{F}, 17)
\be\label{3.5}
S(x, \alpha)= \sum_{1\leq u\leq X_\alpha} \ \sum_{\Omega\in \mathcal R(u)} \ \sum_{\substack {t\equiv\Omega(\text{\rm mod\,} u^2)\\
Y_2(u, \alpha)\leq t\leq Y_3(u)}} \ 1
\ee
when
\be\label{3.6}
\mathcal R(u) =\{\Omega \, \mod u^2; \Omega^2\equiv 1 (\text{mod\,} u^2)\}
\ee
and denoting
\be\label{3.7}
X_\alpha =\frac 12 (x^\alpha -x^{-1-\alpha})
\ee
\be\label{3.8}
Y_2(u, \alpha)\sim 2^{\frac 1{2\alpha}} u^{1+\frac 1{2\alpha}}, Y_3(u)= u x^{\frac 12}.
\ee
As noted in \cite {F}
\be\label{3.9}
S^f(x, \alpha)= S(x, \alpha) \text { for } \alpha\leq \frac 12
\ee
and
\be\label{3.10}
S(x, \alpha)= S^f(x, \alpha)+ S\Big(x, \frac\alpha2-\frac 12\Big) \text { for } 0\leq \alpha\leq \frac 32.
\ee
Assume $\frac 12 \leq \alpha\leq 1$. Then, following \cite{F}, \S4, we have
\be\label{3.11}
S(x, \alpha)\sim L(x, \alpha)+\frac 1{\pi^2} x^{\frac 12} (\log x)^2
\ee
\be\label{3.12}
L(x, \alpha)=\sum_{X_{\frac 12} <u<X_\alpha} \ \sum_{\substack{Du^2=t^2-1\\ Y_2(u)\leq t\leq Y_3(u)}} \ 1.
\ee

Accounting for non-fundamental solutions is then carried out at the end of \S8 and in \S9 of \cite{F}.
Our aim here is to get a slightly better minoration for \eqref{3.12}, by following Fouvry's argument and complementing
with the Kloosterman sum estimate from \S2.
For reasons of exposition, we first need to recall briefly some of the steps in Fouvry's analysis, referring the reader
to \cite {F} for details.

Assuming $u_1, u_2\geq 1$ coprime, set
\be\label{3.13}
\Phi (u_1, u_2)= -\bar u_1^2 u_1^2 +\bar u_2^2 u_2^2 (\text {mod\,} \hat u)
\ee
with $u=u_1 u_2$, $\bar u_1 (resp. \bar u_2)$ the reciprocals $(\mod u_2^2), (resp. (\text {mod\,} u_1^2)$.

Arithmetical operations permit then to express $L(x, \alpha)$ as a sum
\be\label{3.14}
\sum^\infty_{k=0} \, \sum_{\xi\in\mathcal R(2^k)} L(x, \alpha, \xi, k)
\ee
\be\label{3.15}
L(x, \alpha, \xi, k)=\sum_{\substack{u_1, u_2\\ X_{\frac 12} < 2^k u_1 u_2< X_\alpha\\
(u_1u_2, 2)=(u_1, u_2)=1}} \ 
\sum_{\substack{ t=\Phi (u_1, u_2)\mod u^2_1 u^2_2\\ t\equiv \xi \mod 4^k\\ Y_2(2^k u_1u_2)\leq t\leq Y_3(2^k u_1 u_2)}}
.\ee
The inner sum is further manipulated and expanded in a Fourier series.
The zero-Fourier coefficients in the sum over $u_1, u_2$ contribute to the main term and the other coefficients lead to
error contributions that are captured by trilinear Kloosterman sums of the form
\be\label{3.16}
\sum_{1\leq k\leq H}\alpha_k \sum_{u_1\sim U_1} \beta_{u_1} \sum_{u_2\sim U_2} e\Big(h \frac {\bar u_2^2}{u_1^2}\Big)
\ee
where
\be\label{3.17}
\begin{cases}
U_1\leq U_2\lesssim x^{\frac 12} U_1\\ X_{\frac 12} \leq U_1U_2\lesssim X_\alpha\\ H\ll U_1U_2 x^{-\frac 12+\ve}
\end{cases}
\ee
and $|\alpha_k 1, |\beta_ {u_1}|\leq 1$.

The corresponding main contribution is given by
\be\label{3.18}
EMT (x, U_1, U_2)=\sum_{\substack {u_1, u_2\\ u_1\sim U_1, u_2\sim U_2\\ u_1\equiv \xi_1, u_2=\xi_2 (u^k)\\
(u_1, u_2, 2)=1}} \ \frac { Y_3- Y_2}{u_1^k u_1^2 u_2^2}.
\ee

A main portion of the analysis in \cite {F} consists in bounding \eqref{3.16} in various ranges for $U_1, U_2$, retaining
in \eqref{3.17} only those ranges for which conclusive estimates may be obtained.
Summing the corresponding main terms \eqref{3.18} produces a lower bound on $L(x, \alpha)$ leading to the minoration for
$\frac 12\leq \alpha\leq 1$
\be\label{3.19}
S(x, \alpha)\geq \frac 1{\pi^2} \Big(1+\Big(\alpha-\frac 12\Big)\Big(\frac {11}2-3\alpha\Big)-o(1)\Big)
x^{\frac 12} (\log x)^2
\ee 
as stated in Theorem 1 of \cite{F}.

Using \S2 in this Note, we are able to treat certain additional ranges for $U_1, U_2$ as well, hence narrowing further the
excluded summation range.
This leads to a better minoration for $\alpha>\frac 12$ close to $\frac 12$.

Let us be more precise.
The conclusion from the analysis in \S7, \S8 of \cite {F} is that the `admissible' range for $(u_1, u_2)$ is the set
\be\label{3.20}
\mathcal A=\{(u_1, u_2); u_1\leq x^{\frac 14}, x^{\frac 12} u_1^{-1} \leq u_2\leq \min (x^\alpha u_1^{-1}, x^{\frac 12}
u_1)\}
\ee
leading to a contribution
\be\label{3.21}
8x^{\frac 12} \sum_{\substack{ (u_1, u_2)\in\mathcal A\\ (u_1, u_2)=(u_1u_2, 2) =1}} \frac 1{u_1u_2} -o\big(x^{\frac 12}(\log
x)^2\big)
\ee
to the main term.

Let us consider the range $u_1>x^{\frac 14}$.
Thus, recalling \eqref{3.17}
\be\label{3.22}
\begin{cases}
x^{\frac 14}< u_1<x^{\frac \alpha 2}\\ u_1<u_2<\min (x^\alpha u_1^{-1}, x^{\frac 12} u_1)< x^{\alpha -\frac 14}.
\end{cases}
\ee

Returning to \eqref{3.16}, we apply the Proposition from \S2 to the inner sum with $q=u_1^2$, $N=U_2$.
Hence $\rho\sim \frac 12$.
Let $\beta>0$ be a small parameter (to specify) and $E(U_2)$ the exceptional set obtained.
Thus from \eqref{2.20}, \eqref{2.26}
\be\label{3.23}
|E(U_2)|\lesssim \beta \Big(\log \frac 1p\Big)^C U_2
\ee
while for $u_1\sim U_1$, $h\not= 0$
\be\label{3.24}
\Big|\sum_{\substack{u_2\sim U_2\\ u_2\not\in E}} e\Big( h\frac {\bar u_2^2}{u_1^2}\Big)\Big|
<U_2 \big((h, u_1^2)U_2^{-1}\big)^{c\beta^C}.
\ee
In the range \eqref{3.22}, we only exclude the pairs $(u_1, u_2)$ with $u_2\in E(U_2)$.
Their contribution in the main term is bounded by, cf. \eqref{3.21}
\begin{align}
&O\Big(x^{\frac 12}\sum_{x^{\frac 14} <u_1< x^{\frac \alpha 2}} \frac 1{u_1} \sum_{x^{\frac 14}< U_2<x^{\alpha -\frac 14}}
\ \sum_{u_2\in E(U_2)} \frac 1{u_2}\Big)\nonumber\\
&<O\Big(\Big(\alpha -\frac 12\Big)^2 \beta\Big(\log\frac 1\beta\Big)^C x^{\frac 12} (\log x)^2\Big).
\label{3.25}
\end{align}

The error contributions \eqref{3.16} with $U_1, U_2$ as in \eqref{3.22} need to be modified, leading to expressions
\be\label{3.26}
\sum_{0<h<x^{\alpha -\frac 12+\ve}} \ \sum_{u_1\sim U_1} \Big|\sum _{\substack{ u_2\sim U_2\\ u_2\not\in E(U_2)}}
e\Big(h\frac {\bar u^2_2}{u_1^2}\Big)\Big|
\ee
with suitably restricted $u_2$ variable in the inner sum.
Hence \eqref{3.24} applies and we get
\be\label{3.27}
\eqref{3.26} < x^{\alpha -\frac 12+\ve} \, U_1U_2 (x^{\alpha -\frac 12+\ve}U_2^{-1}) ^{c\beta^C}< x^{2\alpha -\frac 12}
\big(x^{\frac 34-\alpha}\big)^{-c\beta^C}
\ee
Hence we need to require $\alpha>\frac 12$ close enough to $\frac 12$ and take $\beta$ sufficiently large to ensure that
\eqref{3.27} $< x^{\frac 12}$.
Thus $\beta$ is at least a small power of $\alpha-\frac 12$.

In conclusion, incorporating the above in Fouvry's analysis leads to a small improvement in his Theorem 1 for $\alpha$
close to $\frac 12$, to the extent of implying
\be\label{3.28}
S^f(x, \alpha)>\frac 1{\pi^2} \Big(1+4\Big(\alpha-\frac 12\Big)-\delta(\alpha)\Big) x^{\frac 12} (\log x)^2
\ee
where now
\be\label{3.29}
\delta(\alpha)=O\Big(\Big(\alpha -\frac 12\Big)^{2+c}\Big)
\ee
for some $c> 0$.

Recall that
\be\label{3.30}
S^f(x, \alpha)\sim \frac 1{\pi^2} \Big(1+4\Big(\alpha -\frac 12\Big)\Big) x^2(\log x)^2
\ee
corresponds to Hooley's conjecture in the range $\frac 12<\alpha\leq 1$. 
See the discussion in \cite{F}, \S1.

\end{document}